\theoremstyle{plain}
\newtheorem{thm}{Theorem}
\newtheorem{cor}[thm]{Corollary}
\theoremstyle{definition}
\newtheorem{dfn}[thm]{Definition}
\newtheorem{exam}[thm]{Example}
\theoremstyle{remark}
\newtheorem*{rem*}{Remark}
\numberwithin{equation}{section}
\newcommand{\C}{\mathbb C}
\newcommand{\R}{\mathbb R}
\renewcommand{\H}{\mathbb H}
\newcommand{\GL}{{\rm GL}}
\newcommand{\gl}{\mathfrak{gl}}
\newcommand{\F}[2][]{
   \ifthenelse{ \equal{#1}{} }
      {F^{#2}_n}
      {F^{(#1,#2)}_n}
}
\newcommand{\gF}[2]{F^{#1}_{#2}}
\newcommand{\tF}[2]{
   \widetilde F^{#1}_{#2}}
\renewcommand{\c}[2]{c^{#1}_{#2}}
\newcommand{\LRC}[2]{{\rm LRC}\:^{#1}_{#2}}
\newcommand{\bmu}{\bm{\mu}}
\newcommand{\bla}{\bm{\lambda}}
\newcommand{\blacol}[1]{\bla_{\bullet#1}}
\newcommand{\blarow}[1]{\bla_{#1\bullet}}
\newcommand{\la}{\lambda}
\renewcommand{\hom}{{\rm Hom}}
\newcommand{\ch}{{\rm ch}}
\newcommand{\U}{{\rm U}}
\renewcommand{\O}{{\rm O}}
\newcommand{\Sp}{{\rm Sp}}
\renewcommand{\sp}{\mathfrak{sp}}
\newcommand{\tE}[2]{\widetilde E^{#1}_{#2}}
\renewcommand{\det}{{\rm det}}
\begin{document}

\title[Contingency tables and Littlewood--Richardson coefficients]{Contingency tables and the generalized Littlewood--Richardson coefficients}

\author{Mark Colarusso}
\address{
Mark Colarusso \\
University of South Alabama \\ Department of Mathematics and Statistics \\
Mobile, AL 36608} 
\email{mcolarusso@southalabama.edu}

\author{William Q. Erickson}
\address{
William Q. Erickson\\
University of Wisconsin--Milwaukee \\
Department of Mathematical Sciences \\
P.O. Box 0413 \\
Milwaukee WI 53201}
\email{wqe@uwm.edu}

\author{Jeb F. Willenbring}
\address{
Jeb F. Willenbring \\
University of Wisconsin--Milwaukee \\
Department of Mathematical Sciences \\
P.O. Box 0413 \\
Milwaukee WI 53201} \email{jw@uwm.edu}

\begin{abstract}
The Littlewood--Richardson coefficients $c^\la_{\mu\nu}$ give the multiplicity of an irreducible polynomial $\GL_n$-representation $\F{\la}$ in the tensor product of polynomial representations $\F{\mu}\otimes \F{\nu}$.  In this paper, we generalize these coefficients to an $r$-fold tensor product of \emph{rational} representations, and give a new method for computing them using an analogue of statistical contingency tables.  We demonstrate special cases in which our method reduces to counting statistical contingency tables with prescribed margins.  Finally, we extend our result from the general linear group to both the orthogonal and symplectic groups.
\end{abstract}

\subjclass[2010]{Primary 20G20; Secondary 17B10; 05E10}

\keywords{Littlewood--Richardson coefficients, rational representations}



\maketitle

\section{Introduction}

The finite-dimensional real associative division algebras are the real numbers $\R$, the complex numbers $\C$, and the quaternions $\H$.  Each of these has a norm defined by conjugation.  Let $A= \R,\C,$ or $\H$, and consider the group of invertible $A$-linear transformations of $A^n$ preserving this norm; this group is a real compact Lie group, and is denoted $\O(n)$, $\U(n)$, or $\Sp(n)$, respectively.  Complexifying these groups, we obtain the complex classical groups $\O(n,\C)$, $\GL(n,\C)$, and $\Sp(2n,\C)$.  Our goal in this paper is to develop a simple combinatorial description of tensor product multiplicities for rational representations of these classical groups.  We will do this by studying the complexifications of the \emph{compact dual pairs} (see \cite{howeremarks} and \cite{howetrans}) associated with the compact real forms above.  We devote the majority of the paper to proving the $\GL(n,\C)$ case in detail.  In the final section, we make the necessary changes to extend the result to $\O(n,\C)$ and $\Sp(2n,\C)$.

In more detail, let $GL_n =\GL(n,\C)$ be the complex general linear group of $n \times n$ invertible complex matrices, which  has the structure of an affine variety.  Its coordinate ring $\C[\GL_n]$ consists of rational functions of the form $\frac{N}{D}$, where $N$ is a polynomial in the matrix entries $x_{ij}$, and $D$ is a nonnegative power of the determinant.  If $G \subseteq \GL_n$ is an algebraic subgroup, then a \textit{rational representation} of $G$ is a group homomorphism $\rho: G \rightarrow \GL_m$ for some $m$, which is a morphism in the category of affine varieties.  In the case that $G = \GL_n$, we will say that a rational representation $\rho : \GL_n \rightarrow \GL_m$ is \emph{polynomial} if the matrix coordinates $\rho(g)_{i,j}$ are polynomials in the entries of $g \in \GL_n$. 

In the existing literature, tensor product multiplicities for rational representations of $\GL_n$ are computed by tensoring with a sufficiently high power of the determinant in order to yield polynomial representations.  Keeping track of these powers of the determinant amounts to bookkeeping that is not always conceptually transparent.  In this paper, we obtain the desired tensor product multiplicities as a special case of Howe duality and seesaw reciprocity, and we express them in terms of natural generalizations of contingency tables from statistics which we refer to as Littlewood--Richardson (or simply ``LR") contingency tables. In fact, in certain special cases these LR-contingency tables specialize to classical contingency tables, and the corresponding tensor product multiplicities become the number of contingency tables with certain fixed margins. Computing the number of such tables is an important open problem in statistics, and we find it interesting that the answer to this question can be expressed in terms of certain classical objects in the representation theory of $\GL_n$.  Our approach can also be used to express tensor product multiplicities for representations of the orthogonal and symplectic groups in terms of \emph{symmetric} LR-contingency tables.  In this manner, we develop a unified framework using Howe duality to compute tensor product multiplicities for any complex classical group.

\section{Preliminaries}
\label{s:prelim}

We begin by recalling the standard labeling of rational representations of $\GL_n$.

\subsection*{Labeling representations of $\GL_n$} A \emph{partition} is a finite, weakly decreasing sequence of positive integers (\emph{parts}), typically denoted by a lowercase Greek letter.  When we need to write out a partition $\lambda$ explicitly, we will write the $i^{\rm th}$ part as $\lambda(i)$, rather than the more standard $\lambda_i$, since we will use subscripts later to index a vector of partitions.  The \emph{length} of a partition $\la=(\la(1),\dots, \la(s))$ will be denoted by $\ell(\la) := s$, while the \emph{size} is defined as $|\la| := \sum_{i=1}^s \la(i)$.
The \emph{empty partition} will be denoted by $0$ and has length and size zero.  Given a partition $\la$, the conjugate partition $\la^\prime$ is the partition whose $j^{\text{th}}$ part is the number of elements in the set $\{i:\la(i) \geq j \}$; alternatively, if we identify a partition with its Young diagram, then $\la'$ is the transpose of $\la$.  Given a partition $\la$ with $\ell(\la)=s$ and a positive integer $n \geq s$, we may identify $\la$ with the $n$-tuple of nonnegative integers $(\la(1), \dots, \la(s), 0, \dots, 0)$, by ``padding out'' with zeros on the right-hand end.

By the theorem of the highest weight, each irreducible rational representation of $\GL_n$ may be labeled by an $n$-tuple of weakly decreasing integers. Specifically, let $\{e_1, \dots, e_n\}$ denote the standard basis of $\C^n$.  Then a weakly decreasing $n$-tuple $(a_1, \dots, a_n) \in \mathbb Z^n$ corresponds to the linear functional sending $e_i$ to $a_i$ for each $i$, which defines a dominant integral highest weight of $\GL_n$, with respect to the standard Borel subgroup of upper triangular matrices and algebraic torus $T$ of diagonal matrices.  The corresponding representation is polynomial if and only if each $a_i$ is nonnegative, i.e., the $n$-tuple is a partition; otherwise, the representation is rational but not polynomial.

In order to label rational representations, we will expand our use of lowercase Greek letters beyond partitions, to denote $n$-tuples of weakly decreasing integers, with the nonnegative condition dropped. It will also sometimes be useful to denote such $n$-tuples by ordered pairs of partitions, as follows.  Suppose we have two partitions $\alpha$ and $\beta$, where $\ell(\alpha)=a$ and $\ell(\beta)=b$, with $a+b\leq n$.  Then we can construct an $n$-tuple of weakly decreasing integers:
\begin{equation}
\label{combine}
    (\alpha,\beta):=
    \Big(\alpha(1),\dots,\alpha(a),0,\dots,0,-\beta(b),\dots,-\beta(1)\Big).
\end{equation}
Conversely, an $n$-tuple $\lambda$ of weakly decreasing integers may be expressed uniquely as the ordered pair of partitions $\la=(\la^+,\la^-)$, where $\la^+$ consists of the positive coordinates in $\lambda$, while $\la^-$ consists of the negative coordinates with signs changed and order reversed.  Any intervening 0's in $\lambda$ are ignored without ambiguity.  It is important to reiterate that both $\la^+$ and $\la^-$ are true partitions, whereas $\la$ is not.

We write $\F[\alpha]{\beta}$ to denote the irreducible rational representation of $\GL_n$ with highest weight $(\alpha,\beta)$ in the standard coordinates described above. We write $\F{\alpha}$ for $\F[\alpha]{0}$ in which case the representation is polynomial.  Note also that the dual representation $(\F[\alpha]{\beta})^*$ is equivalent to $\F[\beta]{\alpha}$.  Thus, $(\F{\alpha})^* \cong \F[0]{\alpha}$.  Finally, $\F{(1)}\cong \C^n$ is the defining representation of $\GL_n$, while $\F[(1)]{(1)}$ is the nontrivial component of the adjoint representation of $\GL_n$ on its Lie algebra $\gl_n$.

\subsection*{Littlewood--Richardson coefficients}
In general, if $V$ and $W$ are representations of a group $G$, then we will set $[V:W] := \dim \hom_G(V,W)$.  Note that $[V:W]$ may be an infinite cardinal; if $V$ and $W$ are finite-dimensional, however, then $[V:W]$ is a nonnegative integer.  Furthermore, if $V$ is irreducible and $W$ is completely reducible as a $G$-representation, then $[V:W]$ is the multiplicity of $V$ in the isotypic decomposition of $W$.

With this notation in mind, let $\la,\mu,\nu$ be partitions with length at most $n$.  For all $k \geq n$, it is a standard fact (see \cite{htw}) that $\left[\F{\la}: \F{\mu} \otimes \F{\nu} \right] =
    \left[F^{\la}_k: F^{\mu}_k \otimes F^{\nu}_k \right]$.  Thus, there is no ambiguity in defining $\c{\la}{\mu \nu} := [\F{\la}: \F{\mu} \otimes \F{\nu}]$.  The numbers $\c{\la}{\mu \nu}$ are commonly called the \emph{Littlewood--Richardson coefficients}, and have a long history in both combinatorics and representation theory.  (See \cite{fulton}, \cite{howe}, and \cite{macdonald}.)

\subsection*{Statement of the problem} In order to generalize these coefficients, we will use a boldface Greek letter $\bmu$ to denote a vector $(\mu_1,\dots,\mu_r)$ of weakly decreasing $n$-tuples:
\begin{equation}
\label{bmudef}
    \bmu=\bigg((\mu^+_1,\mu^-_1),\dots,(\mu^+_r,\mu^-_r)\bigg),
\end{equation}
where $\mu_i = (\mu_i^+,\mu_i^-)$ for a unique pair of partitions $\mu^+_i$ and $\mu^-_i$, as in \eqref{combine}.  It will be important to extract the positive (respectively, negative) components from each of the $\mu_i$ and combine them into a single vector, so we introduce the notation
\begin{equation}
\label{bmupm}
    \bmu^+:=(\mu^+_1,\dots,\mu^+_r), \qquad
    \bmu^-:=(\mu^-_1,\dots,\mu^-_r).
\end{equation}
Given a weakly decreasing $n$-tuple $\la$, and given $\bmu$ as in \eqref{bmudef}, we let
\begin{equation}
\label{eq:poly}
    \LRC{\la}{\bmu} := \left[ \F{\la}:
    \F{\mu_1} \otimes \cdots \otimes \F{\mu_r} \right]
\end{equation}
be the \emph{generalized Littlewood--Richardson coefficient}.  Note that the ``$c$" notation is used for \emph{polynomial} representations of $\GL_n$, whereas ``LRC" is for an arbitrary tensor product of \emph{rational} representations of $\GL_n$.

To compute \eqref{eq:poly}, it suffices to assume that $(\lambda^+,\lambda^-)=(0,0)=0$ is the empty partition.
Indeed,

\begin{align*}
\LRC{\la}{\bmu}&=\left[\F{\lambda}: \F{\mu_1}\otimes\dots\otimes \F{\mu_r}\right]\\
&= \dim \hom_{\GL_n}\left( \F{\la},\:\F{\mu_1}\otimes\dots\otimes \F{\mu_r} \right)\\
&=\dim \left( \left(\F{\la}\right)^* \otimes \F{\mu_1}\otimes\dots\otimes \F{\mu_r} \right)^{\GL_n}\\
&=\dim \hom_{\GL_n}\left(\F{0},\:\left(\F{\la}\right)^* \otimes \F{\mu_1}\otimes\dots\otimes \F{\mu_r} \right)\\
&=\left[ \F{0}:\F[\la^-]{\la^+}\otimes \F{\mu_1}\otimes \cdots \otimes \F{\mu_r} \right]\\
&=\LRC{0}{((\la^-,\la^+),\:\bmu). }
\end{align*}
In the case $r=2$, a formula for this generalized LRC was derived in \cite{king}, and the same result was proved in \cite{htw} using the notion of Howe duality.  As our main result in this paper, we will compute $\LRC{0}{\bmu}$, for arbitrary $r$, using an analogue of statistical contingency tables.

\subsection*{General Weyl duality theorem}

In the proof of our main theorem in Section \ref{results}, we will encounter a specific instance of \textit{Howe duality} (see \cite{htw}).  We begin by recalling a general duality result (Theorem 5.6.1 in \cite{gw}).  

Let $V$ be a finite-dimensional complex vector space, let $\C[V]$ be the algebra of polynomial functions on $V$, and let $\mathbb D(V)$ be the Weyl algebra of polynomial-coefficient differential operators on $V$.  Suppose $G \subseteq \GL(V)$ is a reductive algebraic subgroup.  The natural action of $G$ on $V$ induces actions of $G$ on $\C[V]$ and $\mathbb D(V)$.  Let $\mathcal A:=\mathcal A[G]$ denote the group algebra of $G$, and let $\mathcal B:=\mathbb D(V)^G \subseteq \mathbb D(V)$ be the $G$-invariant elements of the Weyl algebra.  Note that both $\mathcal A$ and $\mathcal B$ are semisimple associative algebras.  There is a canonical multiplicity-free decomposition
\begin{equation}
\label{dualdecomp}
    \C[V]\cong \bigoplus_{\la} F^\la \otimes \widetilde F^\la
\end{equation}
as an $\mathcal A \otimes \mathcal B$-module, where $\la$ is a parameter indexing equivalence classes of irreducible rational representations of $G$.  (The particular irreducible representations that appear in \eqref{dualdecomp} depend on the specific group $G$ in question; we will describe these representations for $G=\GL_n$ in the next subsection.)  In each summand of \eqref{dualdecomp}, the first tensor factor $F^\la$ is the irreducible representation of $G$ labeled by $\la$, and $\widetilde F^\la$ is an irreducible $\mathcal B$-module that uniquely determines $\la$.  In general, $\mathcal B$ is generated as an associative algebra by a finite set of operators that span a Lie algebra $\mathfrak g'$, and $\widetilde F^\la$ is also an irreducible $\mathfrak g'$-module.  We then have a \textit{dual reductive pair} given by $G$---$\mathfrak g'$.

\subsection*{Special case: $\GL_n$---$\gl_{p+q}$ Howe duality}
To prove our main result, we use the special case $G=\GL_n$ and $\mathfrak g'=\gl_{p+q}$, for positive integers $n,p,q$.  In the setting of \eqref{dualdecomp}, set 
\begin{equation*}
    V=M_{n,p}\oplus M_{n,q},
\end{equation*}
where $M_{n,p}$ (respectively $M_{n,q}$) denotes the space of $n\times p$ (respectively $n\times q$) complex matrices.  The action of $\GL_n$ on $V$ via $g\cdot(X,Y)=(gX,(g^{-1})^tY)$ induces an action on $\C[V]$ in the usual way.

From the original perspective of Roger Howe's work (see \cite{howeremarks} and \cite{howetrans}), this dual reductive pair $\GL_n$---$\gl_{p+q}$ is the complexification of the \textit{compact dual pair} $\U(n)$---$\mathfrak{su}(p,q)$.  Indeed, $G=\GL_n$ is the complexification of the compact real group $G_0=\U(n)$, corresponding to the real division algebra $\C$.  (In our final section, we will consider the cases where $G_0$ is $\O(n)$ and $\Sp(n)$, corresponding to $\R$ and $\H$ respectively.)  Meanwhile, $\mathfrak g'=\gl_{p+q}$ is the complexification of the real Lie algebra $\mathfrak g'_0=\mathfrak u(p,q)$.\footnote{It is truer to the spirit of Howe's work to consider $\mathfrak{su}(p,q)$ rather than $\mathfrak u(p,q)$, since the former is a \emph{Hermitian symmetric} real Lie algebra.  Since the center of $\mathfrak u(p,q)$ acts trivially, however, there is no harm in extending to a $\mathfrak{u}(p,q)$-action, so that the complexification is $\gl_{p+q}$.} 

Note that $\mathfrak g'_0$ has maximal compact subalgebra $\mathfrak k'_0=\mathfrak u(p)\oplus \mathfrak u(q)$, embedded block-diagonally; complexifying $\mathfrak k'_0$, we obtain the subalgebra $\mathfrak k' = \gl_p \oplus \gl_q$ of block diagonal matrices in $\mathfrak g'$.  Key to the proof of our main result will be the restriction of the $\mathfrak g'$-action to $\mathfrak k'$. As a consequence of the compactness of $G_0$, the $\mathfrak g'$-modules $\widetilde F^\la$ are completely reducible with finite multiplicities when restricted to $\mathfrak k'$.

We now describe the $\gl_{p+q}$-action on $\C[V]$.  Let $x_{ij}$ and $y_{ij}$ denote the coordinate functions on a pair of matrices $(X,Y)\in V$, and consider a $(p+q)\times (p+q)$ matrix $D$ of differential operators
\begin{equation*}
    D=
    \left[
    {\renewcommand{\arraystretch}{1.5}
    \begin{array}{c|c}
    -\mathfrak k_x(i,j) & \Delta^2(i,j) 
    \\ \hline
    -h^2(i,j) & \mathfrak k_y(i,j)
    \end{array}}
\right]_{\textstyle ,}
\end{equation*}
written in $2\times 2$ block form, where the indexing begins at $(1,1)$ within each individual block, and where

\begin{itemize}
\setlength\itemsep{5pt}
\item $\mathfrak k_x(i,j)=\sum_{k=1}^n x_{kj} \frac{\partial}{\partial x_{ki}}+n\delta_{ij}$ (shifted Euler operators), for $1\leq i,j \leq p$;
\item $\mathfrak k_y(i,j)=\sum_{k=1}^n y_{ki} \frac{\partial}{\partial y_{kj}}$ (Euler operators), for $1 \leq i,j \leq q$;
\item $\Delta^2(i,j)=\sum_{k=1}^n \frac{\partial^2}{\partial x_{ki} \partial y_{kj}}$ (raising operators), for $1\leq i \leq p$ and $1 \leq j \leq q$;
\item $h^2(i,j)=\sum_{k=1}^n x_{kj}y_{ki}$ (lowering operators), for $1\leq i \leq q$ and $1 \leq j \leq p$.
\end{itemize}
(See \cite{htw} and the references therein.)  The $(p+q)^2$ operators in $D$ generate $\mathbb D(V)^G$, and they form a basis for a Lie algebra isomorphic to $\gl_{p+q}$ as follows.  Let $e_{k\ell}\in \gl_{p+q}$ denote the matrix with 1 in the $(k,\ell)$ position and 0's elsewhere. Then the map $e_{k\ell}\mapsto D_{k\ell}$ extends to a homomorphism of Lie algebras $\gl_{p+q}\rightarrow \mathbb D(V)^G$, and $\gl_{p+q}$ acts on $\C[V]$ via its image in $\mathbb D(V)^G$.

For the pair $\GL_n$---$\gl_{p+q}$, the dual decomposition from \eqref{dualdecomp} has the realization
\begin{equation}
\label{typeadualdecomp}
   \C[V]\cong \bigoplus_\la \F{\la} \otimes \tF{\la}{p,q},
\end{equation}
where $\la=(\la^+,\la^-)$ ranges over all weakly decreasing integer $n$-tuples such that $\ell(\la^+)\leq p$ and $\ell(\la^-)\leq q$.   For each such $\la$, the second tensor factor $\tF{\la}{p,q}$ is an irreducible, infinite-dimensional, highest-weight $\gl_{p+q}$-module.
Under the assumption $n\geq p+q$, known as the \textit{stable range}, $\tF{\la}{p,q}$ has additional structure under the restriction to the action of $\mathfrak k'=\gl_p \oplus \gl_q$:
\begin{equation}
\label{eq:kmodule}
    \tF{\la}{p,q}\cong\C\left[M_{p,q}\right]\otimes \left(\gF{\la^+}{p}\otimes \gF{\la^-}{q}\right)_{\textstyle .}
\end{equation}
Since $\mathfrak k'$ is spanned by Euler operators, which preserve the degree of homogeneous polynomial functions, the $\mathfrak k'$-action on \eqref{eq:kmodule} is locally finite. Therefore the action integrates to an action by the group $K'=\GL_p \times \GL_q$, which up to a central shift (introduced by the $\delta_{ij}$ in the Euler operators) is given as follows. The group $K'$ acts on the right-hand side of \eqref{eq:kmodule} with the obvious action on $\gF{\la^+}{p}\otimes \gF{\la^-}{q}$, while the action on $\C[M_{p,q}]$ is given by \begin{equation}
\label{eq:kaction}
((g,h)\cdot f)(X)=f(g^t X h)
\end{equation}
for $g \in \GL_p$, $h\in \GL_q$, $f \in \C[M_{p,q}]$, and $X\in M_{p,q}$.  Under the action in \eqref{eq:kaction},
\begin{equation}
    \label{howepq}
    \C[M_{p,q}]\cong \bigoplus_\la \gF{\la}{p} \otimes \gF{\la}{q}
\end{equation} as a $K'$-representation, where the direct sum is over all partitions $\la$ such that $\ell(\la)\leq \min\{p,q\}$.  (See Corollary 5.6.8 in \cite{gw}.)

\subsection*{Seesaw pairs} Returning to the general setting, consider a dual pair $G$---$\mathfrak g'$ acting on $\C[V]$.  Restricting to a reductive subgroup $H\subset G$ induces another dual pair $H$---$\mathfrak h'$, with corresponding decomposition
\begin{equation*}
    \C[V]\cong\bigoplus_{\la} E^\la \otimes \widetilde E^\la.
    \end{equation*}
The key fact is that $\mathfrak g' \subseteq \mathfrak h'$, which is not obvious but can be verified on a case-by-case basis as in \cite{howeremarks}.  Then we say that $G$---$\mathfrak g'$ and $H$---$\mathfrak h'$ are a \emph{seesaw pair} on the space $\C[V]$, which we display as
\begin{equation*}
\begin{array}{ccc}
    G & \longleftrightarrow & \mathfrak g'  \\
    \uparrow & & \downarrow \\
    H & \longleftrightarrow & \mathfrak h'.
\end{array}
\end{equation*}
The term ``seesaw" evokes the reciprocity of multiplicities
\begin{equation}
\label{eq:seesaw}
     \left[ E^\la : F^\mu \right] = \left[ \widetilde F^\mu : \widetilde E^\la \right].
\end{equation}
This equality, while not \emph{a priori} clear, is a consequence of the $G$- and $H$-actions being the complexifications of actions by compact groups.  In particular, in our setting, $H$ has compact form $H\cap G_0$.  It follows that both actions on the left-hand side of the seesaw pair are completely reducible, so the actions of the Lie algebras on the right-hand side are completely reducible as well. (See \cite{howetrans} and \cite{hk}).  

In this paper, we will consider the seesaw pair
\begin{equation*}
\begin{array}{ccc}
\overbrace{\GL_n\times \cdots \times \GL_n}^r & \longleftrightarrow & \gl_{p_1+q_1}\oplus \cdots \oplus \gl_{p_r+q_r}\\
\uparrow & & \downarrow\\
\Delta\left(\GL_n\right) &\longleftrightarrow & \gl_{P+Q,}\end{array}
\end{equation*}
where $P=\sum_i p_i$ and $Q=\sum_i q_i$.  The symbol $\Delta(\GL_n)$ denotes the diagonal embedding of $\GL_n$ given by $g \mapsto (g,\dots,g)$, while the embedding on the right-hand side is block-diagonal.  

Note that the dual pair on the bottom is the same as we discussed in the previous subsection, with $P$ and $Q$ playing the roles of $p$ and $q$; hence the stable-range condition becomes $n \geq P+Q$.  It follows that 
\begin{align*}
\C[V]&=\C\left[M_{n,P}\oplus M_{n,Q}\right]\\
&\cong \C\left[\left( M_{n,p_1} \oplus M_{n,q_1}\right) \oplus \cdots \oplus \left( M_{n,p_r} \oplus M_{n,q_r}\right)\right]\\
&\cong \C[M_{n,p_1}\oplus M_{n,q_1}] \otimes \cdots \otimes \C[M_{n,p_r} \oplus M_{n,q_r}],
\end{align*}
on which the top dual pair $\GL_n\times \cdots \times \GL_n$---$\gl_{p_1+q_1}\oplus \cdots \oplus \gl_{p_r+q_r}$ acts naturally, as described in the previous subsection.  Now the seesaw reciprocity in \eqref{eq:seesaw} implies
\begin{equation}
\label{eq:lrcseesaw}
    \LRC{\la}{\bmu} = \left[ \tF{\mu_1}{p_1,q_1} \otimes \cdots \otimes \tF{\mu_r}{p_r,q_r} : \tF{\la}{P,Q}\right].
\end{equation}
In the next section, we will apply \eqref{eq:lrcseesaw} when $\la=0$ to compute $\LRC{0}{\bmu}$.

\section{Main result}
\label{results}

In statistics, a \textit{contingency table} is a matrix of nonnegative integers with prescribed row and column sums.  These row and column sums are referred to as the \textit{margins} of the table.  A matrix whose diagonal entries are 0 is said to be \textit{hollow} (see \cite{gentle}, p.\ 42).  Our main result will use an analogue of a hollow contingency table, in which the nonnegative integers are replaced by partitions (labeling polynomial representations of $\GL_n$), and addition along the rows and columns is replaced by taking generalized LRC's.

Let $\bmu$ be as in \eqref{bmudef}, with the vectors $\bmu^+$ and $\bmu^-$ as in \eqref{bmupm}.  Then imagine $\bmu^+$ and $\bmu^-$ arranged outside an $r \times r$ matrix of partitions $\Lambda = (\la_{ij})$ as follows:

\begin{equation*}
\begin{blockarray}{l c c c c}
& \mu^-_1 & \mu^-_2 & \cdots & \mu^-_r\\[2ex]
\begin{block}{l [c c c c]}
\mu^+_1 \phantom{xx}& \la_{1,1} & \la_{1,2} & \dots &\la_{1,r}\\[1.5ex]
\mu^+_2 &\la_{2,1} & \la_{2,2} & \cdots & \la_{2,r}\\
\:\vdots &\vdots & \vdots &    \ddots & \vdots\\
\mu^+_r &\la_{r,1} & \la_{r,2} & \cdots & \la_{r,r}\\ \end{block}
\end{blockarray}
\end{equation*}
Let $\blarow{i}:=(\la_{i,1},\ldots,\la_{i,r})$ denote the vector of partitions in the $i^\text{th}$ row of $\Lambda$, and $\blacol{j}:=(\la_{1,j},\ldots,\la_{r,j})$ the vector of partitions in the $j^\text{th}$ column of $\Lambda$.  Consider $\LRC{\mu^+_i}{\blarow{i}}$ and $\LRC{\mu^-_j}{\blacol{j}}$ for $1 \leq i,j \leq r$.  We write $\|\cdot \|_{\bmu}$ to denote the product of all these generalized LRC's.

\begin{dfn}
Let $\Lambda$ be an $r\times r$ matrix of partitions $\la_{ij}$, with $\bmu$ as in \eqref{bmudef}.  Then
\begin{equation*}
    \|\Lambda\|_{\bmu}:=\prod_{i=1}^r \left(\LRC{\mu^+_i}{\blarow{i}} \right) \left(\LRC{\mu^-_i}{\blacol{i}}\right)_{\textstyle .}
\end{equation*}
\end{dfn}

Now we define our analogue of a statistical contingency table, where the ``LR" stands for ``Littlewood--Richardson."
\begin{dfn}
\label{dfn:LRCT}
Let $\bmu$ be as in \eqref{bmudef}.  An \emph{LR-contingency table with margins $\bmu$} is an $r \times r$ matrix of partitions $\Lambda=(\la_{ij})$ such that $\| \Lambda \|_{\bmu} \neq 0$.  If $\la_{ii}=0$ for all $i$, then we say that $\Lambda$ is \textit{hollow}.
\end{dfn}
Readers familiar with the Littlewood--Richardson rule will see that in an LR-contingency table $\Lambda$ with margins $\bmu$, we must have $\ell(\la_{ij})\leq \min\{\ell(\mu^+_i),\ell(\mu^-_j)\}$.

Our main result is the following expression for the generalized LRC in \eqref{eq:poly} with $\la=0$, in terms of hollow LR-contingency tables.

\begin{thm}\label{thm:bigthm}
Let $\bmu$ be as in \eqref{bmudef}.  If $n \geq \sum_{i=1}^r \ell(\mu^+_i)+\ell(\mu^-_i)$, then
\begin{equation*}
\LRC{0}{\bmu}=\sum_\Lambda \| \Lambda \|_{\bmu,}
\end{equation*}
where $\Lambda$ ranges over all hollow LR-contingency tables with margins $\bmu$.
\end{thm}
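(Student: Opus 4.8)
The plan is to feed the seesaw identity \eqref{eq:lrcseesaw} into a Frobenius‑reciprocity argument, reduce everything to the structure of an explicit invariant ring, and then compute that ring; the hollowness of $\Lambda$ will fall out of the last step. Throughout I take $p_a=\ell(\mu^+_a)$ and $q_a=\ell(\mu^-_a)$, so that $P=\sum_a p_a$, $Q=\sum_a q_a$, the stable‑range hypothesis $n\ge P+Q$ is exactly the assumption of the theorem, and every partition that can occur as an entry $\la_{ab}$ of an LR‑contingency table satisfies $\ell(\la_{ab})\le\min\{p_a,q_b\}$, hence indexes a genuine summand in \eqref{howepq} for $\C[M_{p_a,q_b}]$.

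\emph{Step 1: from seesaw to an invariant ring.} Specializing \eqref{eq:lrcseesaw} to $\la=0$ gives $\LRC{0}{\bmu}=[\tF{\mu_1}{p_1,q_1}\otimes\cdots\otimes\tF{\mu_r}{p_r,q_r}:\tF{0}{P,Q}]$, i.e.\ the multiplicity of the irreducible $\mathfrak g'=\gl_{p_1+q_1}\oplus\cdots\oplus\gl_{p_r+q_r}$‑module on the left in the restriction of the $\gl_{P+Q}$‑module $\tF{0}{P,Q}$ to $\mathfrak g'$. Each factor $\tF{\mu_a}{p_a,q_a}$ is, by \eqref{eq:kmodule}, a highest‑weight $\gl_{p_a+q_a}$‑module whose highest‑weight space is its minimal $\mathfrak k'_a=\gl_{p_a}\oplus\gl_{q_a}$‑type $W_a:=\gF{\mu^+_a}{p_a}\otimes\gF{\mu^-_a}{q_a}$ (the degree‑zero part of $\C[M_{p_a,q_a}]$), and $W_a$ is annihilated by the nilradical $\mathfrak p^+_a$ spanned by the raising operators $\Delta^2(i,j)$ of block $a$. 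Since $\tF{0}{P,Q}$ restricted to $\mathfrak g'$ is a sum of such irreducible quotients of generalized Verma modules, parabolic Frobenius reciprocity (applied in each block, followed by passing to $\GL_n$‑invariants) gives
\[
\LRC{0}{\bmu}=\dim\hom_{\mathfrak l}\!\Big(W_1\otimes\cdots\otimes W_r,\ \big(\tF{0}{P,Q}\big)^{\mathfrak p'^+}\Big),
\qquad \mathfrak l:=\textstyle\bigoplus_a\mathfrak k'_a,\quad \mathfrak p'^+:=\bigoplus_a\mathfrak p^+_a .
\]
Restricting $\tF{0}{P,Q}$ to $\mathfrak k'=\gl_P\oplus\gl_Q$ identifies it, up to a one‑dimensional central twist which I would track but suppress here, with $\C[M_{P,Q}]=\C[V]^{\GL_n}$, and under this identification $\mathfrak p'^+$ acts on $\C[M_{P,Q}]$ by the images of the $\Delta^2(i,j)$. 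Writing an invariant in block‑matrix coordinates $Z=(Z_{ab})$, $Z_{ab}\in M_{p_a,q_b}$, a direct computation shows that $\Delta^2_a(i,j)$ becomes $n\,\partial/\partial Z^{(a,i)}_{(a,j)}$ plus a Capelli‑type second‑order operator contracting the $i$‑th row‑index of block‑row $a$ of $Z$ against the $j$‑th column‑index of block‑column $a$. Hence $\LRC{0}{\bmu}=\dim\hom_{\mathfrak l}(W_1\otimes\cdots\otimes W_r,\ \C[M_{P,Q}]^{\mathfrak p'^+})$.

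\emph{Step 2: the key lemma, and why it finishes the proof.} The core of the argument is the following description of the invariant ring as an $\mathfrak l$‑module (again up to the uniform twist):
\[
\C[M_{P,Q}]^{\mathfrak p'^+}\ \cong\ \bigoplus_{\Lambda\ \mathrm{hollow}}\ \bigotimes_{a=1}^{r}\Big(\big(\textstyle\bigotimes_{b=1}^{r}\gF{\la_{ab}}{p_a}\big)\otimes\big(\bigotimes_{b=1}^{r}\gF{\la_{ba}}{q_a}\big)\Big),
\]
where for fixed $a$ the first factor is the $r$‑fold tensor product of $\GL_{p_a}$‑modules along the $a$‑th block‑row of $\Lambda$ and the second is the $r$‑fold tensor product of $\GL_{q_a}$‑modules along the $a$‑th block‑column. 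Granting this, $\hom_{\mathfrak l}(W_1\otimes\cdots\otimes W_r,-)$ extracts from each hollow $\Lambda$ the factor $\prod_a[\bigotimes_b\gF{\la_{ab}}{p_a}:\gF{\mu^+_a}{p_a}]\cdot[\bigotimes_b\gF{\la_{ba}}{q_a}:\gF{\mu^-_a}{q_a}]=\prod_a\LRC{\mu^+_a}{\blarow{a}}\cdot\LRC{\mu^-_a}{\blacol{a}}=\|\Lambda\|_{\bmu}$, and summing over hollow $\Lambda$ yields the theorem.

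\emph{Step 3: proving the key lemma — the main obstacle.} Ignoring $\mathfrak p'^+$, one has $\C[M_{P,Q}]=\bigotimes_{a,b}\C[M_{p_a,q_b}]$, and applying \eqref{howepq} to each of the $r^2$ factors and collecting the $\GL_{p_a}$‑ and $\GL_{q_a}$‑tensorands gives the same right‑hand side but with $\Lambda$ ranging over \emph{all} matrices of partitions. So it remains to show that imposing $\mathfrak p'^+$‑invariance has precisely two effects: it forces $\la_{aa}=0$ for every $a$ (hollowness), and it cuts the multiplicity of every surviving $\mathfrak l$‑type down to one. The operators $\mathfrak p^+_a$ lie in distinct summands of $\mathfrak g'$, hence commute, so the block conditions can be imposed one at a time; the decisive computation is that $\mathfrak p^+_a$‑invariance, read through the Capelli operators above, simultaneously decouples the $\GL_{p_a}$‑action on block‑row $a$ from the $\GL_{q_a}$‑action on block‑column $a$ and eliminates the diagonal block $Z_{aa}$. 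This is the multi‑block analogue of the classical fact that $\C[M_{n,p}\oplus M_{n,q}]$ has one‑dimensional minimal $\mathfrak k'$‑types; I would establish it by induction on $r$, at each stage recognizing the $\mathfrak p^+_a$‑invariants as the harmonic subspace for the relevant pair of blocks and using the first fundamental theorem of invariant theory for $\GL_n$ (valid since $n\ge P+Q$) to rule out new relations. I expect this harmonic/Capelli computation — and the bookkeeping needed to verify that the central twist is the same in every block and therefore factors out of the count — to be the hard and lengthy part of the proof.
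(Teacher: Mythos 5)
Your Step 1 tracks the paper's opening moves exactly: both start from the seesaw identity \eqref{eq:lrcseesaw} at $\la=0$ and both identify $\tF{0}{P,Q}$ with $\C[M_{P,Q}]$ as a $\mathfrak k'$-module. After that you diverge, and the divergence is where the gap lies.

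The paper never computes $\mathfrak p'^+$-invariants at all. Instead it uses \eqref{eq:kmodule} to rewrite \emph{every} $\tF{\mu_k}{p_k,q_k}$ on the right-hand side of \eqref{eq:reciprocity} as $\C[M_{p_k,q_k}]\otimes\gF{\mu^+_k}{p_k}\otimes\gF{\mu^-_k}{q_k}$ over $\mathfrak k'$, combines this with the block decomposition $\C[M_{P,Q}]\cong\bigotimes_{i,j}\C[M_{p_i,q_j}]$ to get \eqref{eq:bigdecomp}, passes to formal $T$-characters, and then \emph{cancels} the common factors $\ch\big(\C[M_{p_i,q_i}]\big)$ from both sides. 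This cancellation is what produces hollowness, in one stroke, with no differential-operator analysis and no first fundamental theorem needed: one is left with $\bigotimes_{i\ne j}\C[M_{p_i,q_j}]$ against $\bigoplus_\bmu m_\bmu\bigotimes_k\gF{\mu^+_k}{p_k}\otimes\gF{\mu^-_k}{q_k}$, and \eqref{howepq} finishes the decomposition. The fact that the diagonal blocks appear as identical multiplicative factors on both sides of the character identity \eqref{eq:Tchar} is the whole trick, and you miss it by going straight for the highest-weight vectors.

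Your route replaces this by the assertion
\[
\C[M_{P,Q}]^{\mathfrak p'^+}\ \cong\ \bigoplus_{\Lambda\ \mathrm{hollow}}\ \bigotimes_{a}\Big(\textstyle\bigotimes_{b}\gF{\la_{ab}}{p_a}\Big)\otimes\Big(\textstyle\bigotimes_{b}\gF{\la_{ba}}{q_a}\Big),
\]
equivalently $\C[M_{P,Q}]^{\mathfrak p'^+}\cong\bigotimes_{i\ne j}\C[M_{p_i,q_j}]$ as an $\mathfrak l$-module. That statement is (in hindsight, using the theorem) correct, but as written it is the entire content of the theorem, and your proposal does not prove it. The sketch you offer --- induction on $r$, ``recognizing the $\mathfrak p^+_a$-invariants as the harmonic subspace,'' first fundamental theorem to ``rule out new relations'' --- is not a proof, and you yourself flag it as the ``hard and lengthy part.'' A reader cannot verify from what you wrote that the Capelli operators $\Delta^2_a(i,j)$, which in the $Z=(Z_{ab})$ coordinates couple \emph{every} block $Z_{ab}$ with $a$ fixed to every $Z_{a'b}$ through the contraction over $k=1,\dots,n$, really do nothing more than ``decouple block-row $a$ from block-column $a$ and eliminate $Z_{aa}$.'' So the proposal has a genuine gap: the key lemma is asserted rather than established, and the parabolic-Frobenius reduction, while legitimate, has only moved the difficulty from one place to another. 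The paper's character-cancellation argument is worth internalizing precisely because it dissolves this difficulty instead of confronting it.

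One smaller remark: your Frobenius-reciprocity step does require the observation that $\tF{0}{P,Q}|_{\mathfrak g'}$ is a direct sum of irreducible $\mathfrak p'^+$-locally-finite lowest-weight modules with multiplicity-one lowest $\mathfrak l$-types, so that $\hom$ from the generalized Verma module agrees with $\hom$ from its irreducible quotient. You gesture at this but do not state it; it should be made explicit if this route is pursued.
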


\begin{proof}
We consider the seesaw pair discussed in
Section \ref{s:prelim}:
\begin{equation*}
\begin{array}{ccc}
\overbrace{\GL_n\times \cdots \times \GL_n}^r & \longleftrightarrow & \gl_{p_1+q_1}\oplus \cdots \oplus \gl_{p_r+q_r}\\
\uparrow & & \downarrow\\
\Delta\left(\GL_n\right) &\longleftrightarrow & \gl_{P+Q}\end{array}
\end{equation*}
We choose $p_i=\ell(\mu^+_i)$, and $q_i=\ell(\mu^-_i)$.  Since $n \geq \sum_{i=1}^r \ell(\mu^+_i)+\ell(\mu^-_i)$, we are in the stable range.

Let
 \begin{equation}
 \label{eq:reciprocity}
m_{\bmu}:=\left[\tF{\mu_1}{p_1,q_1} \otimes \cdots \otimes \tF{\mu_r}{p_r,q_r} : \tF{0}{P,Q}\right]
\end{equation}
denote the multiplicity of the $\bigoplus_i \gl_{p_i+q_i}$-module $\tF{\mu_1}{p_1,q_1} \otimes \cdots \otimes \tF{\mu_r}{p_r,q_r}$ in $\tF{0}{P,Q}$.  By the seesaw reciprocity from \eqref{eq:lrcseesaw}, we have
\begin{equation}
\label{mmu}
\LRC{0}{\bmu}=m_{\bmu}.
\end{equation}
To compute the multiplicity $m_{\bmu}$, we consider the $\gl_{P+Q}$-module $\tF{0}{P,Q}$ as a module for its maximal compact subalgebra $\mathfrak k'=\gl_P \oplus \gl_Q$, and use \eqref{eq:kmodule}.  Since $\la=0$, equation \eqref{eq:kmodule} implies that
\begin{equation}
\label{invariant}
\tF{0}{P,Q} \cong \C\left[M_{P,Q}\right]
\end{equation}
as a $\mathfrak k'$-module, and therefore as a representation of $K'=\GL_P\times \GL_Q$, acting on the right-hand side as in \eqref{eq:kaction}.

We decompose this module by restricting again. Let $K=\GL_{p_1}\times \cdots\times\GL_{p_r}\times\GL_{q_1}\times\cdots\times\GL_{q_r}\hookrightarrow K'$ block-diagonally.
It follows from the definition of the action in \eqref{eq:kaction} and block matrix multiplication that
\begin{equation}
\label{eq:modulebreakdown}
\C\left[M_{P,Q}\right]\cong \bigotimes_{i,\, j=1}^{r}\C[M_{p_{i}, q_{j}}]
\end{equation}
as a $K$-representation,
with $\GL_{p_i}\times\GL_{q_j}$ acting on $\C\left[M_{p_i, q_j}\right]$ via \eqref{eq:kaction}.  Using \eqref{eq:kmodule} to decompose $\tF{\mu_1}{p_1,q_1} \otimes \cdots \otimes \tF{\mu_r}{p_r,q_r}$ as a $K$-representation, it follows from \eqref{eq:modulebreakdown} and \eqref{eq:reciprocity} that
\begin{equation}
\label{eq:bigdecomp}
\bigotimes_{i,j=1}^{r} 
\C[M_{p_i, q_j}]
\cong 
\bigoplus_{\bmu} m_{\bmu} \left(\bigotimes_{k=1}^r \C[M_{p_k,q_k}] \otimes \gF{\mu^+_k}{p_k} \otimes \gF{\mu^-_k}{q_k}\right)_{\textstyle .}
\end{equation}

Now let $\mathfrak t\subset \gl_{P+Q}$ be the standard
Cartan subalgebra of diagonal matrices, and let $T\subset \GL_{P+Q}$ be the
corresponding Cartan subgroup. Taking formal $T$-characters in \eqref{eq:bigdecomp}, we obtain
\begin{equation}
\label{eq:Tchar}
\prod_{i,j=1}^r \ch\left(\C[M_{p_i,q_j}]\right) = \sum_{\bmu} m_{\bmu}\prod_{k=1}^r \ch \left(\C[M_{p_k,q_k}]\right)\cdot \ch \left(\gF{\mu^+_k}{p_k}\right) \cdot \ch\left( \gF{\mu^-_k}{q_k}\right).
\end{equation}
Canceling the factors $\ch\left(\C[M_{p_i, q_i}]\right)$ for $i=1,\dots , r$ from both sides of \eqref{eq:Tchar}, we have
$$
\prod_{i\neq j}\ch\left( \C[M_{p_i,q_j}]\right)=\sum_{\bmu}m_{\bmu} \prod_k \ch \left( \gF{\mu^+_k}{p_k}\right)\cdot \ch \left( \gF{\mu^-_k}{q_k}\right)_{\textstyle .}
$$
Since the action of $K$ is locally finite, this equality of characters implies the following isomorphism of $K$-representations:
\begin{equation}
\label{eq:secondbigiso}
\bigotimes_{i\neq j} \C[M_{p_i,q_j}] \cong \bigoplus_{\bmu} m_{\bmu}\bigotimes_k \gF{\mu^+_k}{p_k} \otimes \gF{\mu^-_k}{q_{k.}}
\end{equation}
We will now decompose the left-hand side of \eqref{eq:secondbigiso} as a $K$-representation.  It follows from \eqref{howepq} that
as a $\GL_{p_i}\times \GL_{q_j}$-representation, under the action in \eqref{eq:kaction}, we have
\begin{equation}
\label{eq:multfree}
\C[M_{p_i, q_j}]\cong\bigoplus_{\la_{ij}} \gF{\la_{ij}}{p_i}\otimes \gF{\la_{ij}}{q_j}
\end{equation}
where the direct sum is over all partitions $\la_{ij}$ such that $\ell(\la_{ij})\leq \min\{p_i, q_j\}$.
Using \eqref{eq:multfree}, we see that \eqref{eq:secondbigiso} becomes
\begin{equation}
\label{eq:aftermultfree}
\bigotimes_{i\neq j} \bigoplus_{\la_{ij}}\gF{\la_{ij}}{p_i}\otimes \gF{\la_{ij}}{q_j} \cong \bigoplus_{\bmu} m_{\bmu}\bigotimes_k \gF{\mu^+_k}{p_k}\otimes\gF{\mu^-_k}{q_k.}
\end{equation}
Interchanging the tensor product and direct sum operations, we can rewrite the left-hand side of \eqref{eq:aftermultfree} as the $K$-representation
\begin{equation}
\label{hugetensor}
\bigoplus_\Lambda \Big(\gF{\la_{1,2}}{p_1} \otimes \cdots \otimes \gF{\la_{1,r}}{p_1} \otimes \gF{\la_{2,1}}{q_1} \otimes \cdots \otimes \gF{\la_{r,1}}{q_1} \otimes \cdots \otimes 
\gF{\la_{r,1}}{p_r} \otimes \cdots \otimes \gF{\la_{r,r-1}}{p_r} \otimes \gF{\la_{1,r}}{q_r} \otimes \cdots \otimes \gF{\la_{r-1,r}}{q_r}\Big),
\end{equation}
where the direct sum is over all hollow $r \times r$ matrices of partitions $\Lambda = (\la_{ij})$ with $\ell(\la_{ij})\leq \min\{p_i, q_j\}$, since $i\neq j$ in \eqref{eq:aftermultfree}.  Then \eqref{eq:aftermultfree} implies that $m_{\bmu}$ is the multiplicity of the irreducible $K$-representation $\gF{\mu^+_1}{p_1} \otimes \gF{\mu^-_1}{q_1} \otimes \cdots \otimes \gF{\mu^+_r}{p_r} \otimes \gF{\mu^-_r}{q_r}$
in the $K$-representation \eqref{hugetensor}.  For a fixed summand in \eqref{hugetensor}, this multiplicity is 0 unless $\Lambda$ is an LR-contingency table with margins $\bmu$, in which case $m_{\bmu}=\| \Lambda \|_{\bmu}$.  Therefore $m_{\bmu}=\sum_\Lambda \| \Lambda \|_{\bmu}$.  The result now follows from \eqref{mmu}.
\end{proof}

\section{Special cases and corollaries}
\label{s:special}

Our result lends itself to a wide range of special cases which reduce to counting classical contingency tables.

\begin{cor}
\label{c:onepart}
Let $\bmu=(\mu_1,\dots,\mu_r)$ where each $\mu_i=(a_i,0,0,\dots,0,0,-b_i)$ for $a_i,b_i \in \mathbb Z_{\geq 0}$.  If $n \geq 2r$, then $\LRC{0}{\bmu}$ is equal to the number of hollow contingency tables with margins $(a_1,\dots,a_r)$ and $(b_1,\dots,b_r)$.
\end{cor}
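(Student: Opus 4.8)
The strategy is to specialize Theorem~\ref{thm:bigthm} to the given $\bmu$ and check that, in this case, an LR-contingency table is forced to reduce to an ordinary (hollow) contingency table, with $\|\Lambda\|_{\bmu}$ collapsing to $1$ for each valid table and $0$ otherwise. First I would observe that $\ell(\mu^+_i)=1$ whenever $a_i>0$ (and $=0$ when $a_i=0$), and similarly $\ell(\mu^-_i)=1$ or $0$ depending on $b_i$; in all cases $\ell(\mu^+_i)+\ell(\mu^-_i)\le 2$, so the hypothesis $n\ge 2r$ guarantees $n\ge\sum_i(\ell(\mu^+_i)+\ell(\mu^-_i))$ and Theorem~\ref{thm:bigthm} applies.

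**Key steps.** Next I would invoke the remark immediately preceding the theorem: in any LR-contingency table $\Lambda$ with margins $\bmu$, one has $\ell(\la_{ij})\le\min\{\ell(\mu^+_i),\ell(\mu^-_j)\}\le 1$. Hence every $\la_{ij}$ is a single-row partition $(\la_{ij}(1))$ — equivalently, a nonnegative integer — and $\la_{ij}=0$ exactly when $\mu^+_i=0$ or $\mu^-_j=0$. So the matrix $\Lambda$ is literally a matrix of nonnegative integers, hollow in Definition~\ref{dfn:LRCT}'s sense precisely when $\la_{ii}=0$. The heart of the proof is then to evaluate $\|\Lambda\|_{\bmu}$. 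Each factor is of the form $\LRC{\mu^+_i}{\blarow{i}}=[\F{(a_i)}:\F{(\la_{i,1}(1))}\otimes\cdots\otimes\F{(\la_{i,r}(1))}]$, a tensor product multiplicity among single-row (symmetric-power) representations of $\GL_n$. Since $\F{(m)}\cong S^m(\C^n)$ and $S^{m_1}(\C^n)\otimes\cdots\otimes S^{m_r}(\C^n)$ decomposes with $\F{(a)}$ appearing with multiplicity $1$ if $a=\sum_k m_k$ and $0$ otherwise (this is the Pieri rule applied iteratively, or equivalently $c^{(a)}_{(m_1)\cdots(m_r)}=\delta_{a,\sum m_k}$), I would conclude that $\LRC{\mu^+_i}{\blarow{i}}$ equals $1$ when $a_i=\sum_{j}\la_{ij}(1)$ and $0$ otherwise. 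The analogous statement holds for the column factors $\LRC{\mu^-_j}{\blacol{j}}$, giving $1$ when $b_j=\sum_i\la_{ij}(1)$ and $0$ otherwise. Therefore $\|\Lambda\|_{\bmu}\in\{0,1\}$, equaling $1$ exactly when the integer matrix $(\la_{ij}(1))$ has row sums $(a_1,\dots,a_r)$ and column sums $(b_1,\dots,b_r)$; combined with the hollowness condition $\la_{ii}=0$, this says precisely that $\Lambda$ is a hollow contingency table with the prescribed margins. Summing over all $\Lambda$ as in Theorem~\ref{thm:bigthm} then counts exactly these tables.

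**Main obstacle.** The only real content beyond bookkeeping is the evaluation of the one-row LR coefficients $c^{(a)}_{(m_1)\cdots(m_r)}$, i.e., verifying that an $r$-fold tensor product of symmetric powers contains the symmetric power $S^a$ with multiplicity $\delta_{a,\sum m_k}$. I would handle this by induction on $r$ using the Pieri rule $\F{\nu}\otimes\F{(m)}\cong\bigoplus\F{\kappa}$ over $\kappa$ obtained from $\nu$ by adding a horizontal strip of size $m$: to land back in a single-row shape $(a)$ at the end, every intermediate shape must also be a single row, forcing $\kappa=(|\nu|+m)$ at each stage and leaving no freedom. Edge cases where some $a_i=0$ or $b_j=0$ (so $\mu^+_i=0$ or $\mu^-_j=0$) are consistent with this: the corresponding row or column of $\Lambda$ is then forced to be all zeros, matching a row/column sum of $0$ in the contingency table. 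I expect no serious difficulty here, only the need to state the Pieri-rule input cleanly.
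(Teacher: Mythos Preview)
Your proposal is correct and follows essentially the same approach as the paper: apply Theorem~\ref{thm:bigthm}, use the length bound $\ell(\la_{ij})\le 1$ to reduce each entry to a nonnegative integer, and invoke the Pieri rule to see that each row and column factor $\LRC{(a_i)}{\blarow{i}}$, $\LRC{(b_j)}{\blacol{j}}$ is $1$ or $0$ according to whether the corresponding sum matches. You are slightly more explicit than the paper about verifying the stable-range hypothesis $n\ge\sum_i(\ell(\mu^+_i)+\ell(\mu^-_i))$ and about the edge cases $a_i=0$ or $b_j=0$, but the argument is the same.
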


\begin{proof} 
We have $\mu_i=((a_i),(b_i))$ in the notation from \eqref{combine}; therefore $\mu^+_i=(a_i)$ and $\mu^-_j=(b_j)$.  The Pieri rule (see \cite{fulton}, p.\ 24) tells us that 
\begin{equation*}
\LRC{(a_i)}{\blarow{i}}=
\begin{cases}
    1 & \text{if }\ell\left(\la_{ij}\right)\leq1 \text{ for all } j=1,\dots,r \text{, and } \displaystyle\sum_{j=1}^r |\la_{ij}|=a_i,\\
    0 & \text{otherwise}.
\end{cases}
\end{equation*}
Likewise,
\begin{equation*}
\LRC{(b_j)}{\blacol{j}}=
\begin{cases}
    1 & \text{if }\ell\left(\la_{ij}\right)\leq1 \text{ for all } i=1,\dots,r \text{, and } \displaystyle\sum_{i=1}^r |\la_{ij}|=b_j,\\
    0 & \text{otherwise}.
\end{cases}
\end{equation*}
Therefore, for any LR-contingency table $\Lambda$ with margins $\bmu$, we have $\| \Lambda \|_{\bmu}=1$; moreover, each entry is a one-part partition, which is naturally identified with a nonnegative integer.  Hence any such $\Lambda$ is actually a classical contingency table with margins $(a_1,\ldots,a_r)$ and $(b_1,\ldots,b_r)$ and zeros down the diagonal.  Theorem \ref{thm:bigthm} now implies that $\LRC{0}{\bmu}$ equals the number of such contingency tables.
\end{proof}

As in the previous corollary, all of the following examples are applications of Theorem \ref{thm:bigthm} and the Pieri rule.

\begin{exam}
As mentioned in Section \ref{s:prelim},  $\F[(1)]{(1)}$ is the nontrivial component of the adjoint representation of $G=\GL_n$.  We now compute $\dim\left(\textstyle\bigotimes^r \F[(1)]{(1)}\right)^G$ when $n \geq 2r$.

It follows from Corollary \ref{c:onepart} that this dimension is equal to the number of hollow $r\times r$ matrices with nonnegative integer entries whose row and column sums are all $1$.  We recognize such matrices as $r\times r$ permutation matrices with no fixed points, which correspond to the \emph{derangements} of an $r$-element set.  The number of derangements is often denoted by $!r$, and so 
\begin{equation*}
 \dim\left(\textstyle\bigotimes^r \F[(1)]{(1)}\right)^G=\:!r.
\end{equation*}
\end{exam}

\begin{cor}
\label{c:realcontingency}
Let $G=\GL_n$, where $n \geq r+s$.  Then for $a_i,b_j\in \mathbb Z_{\geq 0}$, the dimension
\begin{equation*}
    \dim \hom_G\left(\bigotimes_{i=1}^r \F{(a_i)},\:\bigotimes_{j=1}^s \F{(b_j)}\right)
\end{equation*}
is equal to the number of $r \times s$ contingency tables with margins $(a_1,\dots,a_r)$ and $(b_1,\dots,b_s)$.
\end{cor}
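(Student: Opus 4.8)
The plan is to realize the given $\hom$-space as the generalized LRC $\LRC{0}{\bmu}$ for a suitable $\bmu$, and then invoke Theorem~\ref{thm:bigthm}, exactly as in the proof of Corollary~\ref{c:onepart}. Since all representations involved are finite-dimensional, passing to invariants and using the definition \eqref{eq:poly} gives
\[
\dim\hom_G\Bigl(\textstyle\bigotimes_{i=1}^r\F{(a_i)},\ \bigotimes_{j=1}^s\F{(b_j)}\Bigr)
=\Bigl[\F{0}:\textstyle\bigotimes_{i=1}^r\bigl(\F{(a_i)}\bigr)^{*}\otimes\bigotimes_{j=1}^s\F{(b_j)}\Bigr].
\]
Here $\bigl(\F{(a_i)}\bigr)^{*}$ is the irreducible rational representation with highest weight $(0,\dots,0,-a_i)$, i.e.\ the one attached to the pair of partitions $(0,(a_i))$, while $\F{(b_j)}$ is attached to $((b_j),0)$. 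Thus the right-hand side is precisely $\LRC{0}{\bmu}$, where $\bmu=(\mu_1,\dots,\mu_{r+s})$ has $\mu^+_i=0$ and $\mu^-_i=(a_i)$ for $1\le i\le r$, and $\mu^+_{r+j}=(b_j)$ and $\mu^-_{r+j}=0$ for $1\le j\le s$. Every $\ell(\mu^\pm_k)$ here is at most $1$, so the hypothesis $n\ge r+s$ puts us in the stable range, and Theorem~\ref{thm:bigthm} applies with the role of ``$r$'' played by $r+s$, yielding $\LRC{0}{\bmu}=\sum_\Lambda\|\Lambda\|_{\bmu}$ over hollow LR-contingency tables $\Lambda=(\la_{kl})$ with margins $\bmu$.

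Next I would pin down the shape of such a $\Lambda$. In its $(r+s)\times(r+s)$ array the $k$-th row has margin $\mu^+_k$ and the $l$-th column has margin $\mu^-_l$; among these only rows $r+1,\dots,r+s$ (with margins $(b_1),\dots,(b_s)$) and columns $1,\dots,r$ (with margins $(a_1),\dots,(a_r)$) are nonzero. Because a tensor product of polynomial $\GL_n$-representations contains the trivial representation only if every factor is trivial (a degree / central-character count), $\LRC{0}{\blarow{k}}\ne0$ forces the $k$-th row of $\Lambda$ to vanish for $k\le r$, and $\LRC{0}{\blacol{l}}\ne0$ forces the $l$-th column to vanish for $l>r$. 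Hence the only possibly-nonzero entries of $\Lambda$ lie in the $s\times r$ block with row indices $r+1,\dots,r+s$ and column indices $1,\dots,r$; in particular each diagonal entry $\la_{kk}$ sits in a forced-zero row or column, so the hollow condition holds automatically and imposes nothing.

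Finally, exactly as in Corollary~\ref{c:onepart}, the Pieri rule gives $\LRC{(b_i)}{\blarow{r+i}}=1$ iff every entry of that row has length at most $1$ and $\sum_l|\la_{r+i,l}|=b_i$, and $\LRC{(a_j)}{\blacol{j}}=1$ iff every entry of that column has length at most $1$ and $\sum_k|\la_{k,j}|=a_j$; otherwise each coefficient is $0$. Thus $\|\Lambda\|_{\bmu}\in\{0,1\}$, and $\|\Lambda\|_{\bmu}=1$ precisely when the $s\times r$ array of nonnegative integers $\bigl(|\la_{r+i,\,j}|\bigr)_{1\le i\le s,\ 1\le j\le r}$ has row sums $(b_1,\dots,b_s)$ and column sums $(a_1,\dots,a_r)$ --- equivalently, after transposing, when it is an $r\times s$ contingency table with margins $(a_1,\dots,a_r)$ and $(b_1,\dots,b_s)$. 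This gives a bijection between hollow LR-contingency tables with margins $\bmu$ and such classical contingency tables, along which $\|\Lambda\|_{\bmu}$ is identically $1$; Theorem~\ref{thm:bigthm} then identifies $\LRC{0}{\bmu}$ with their number, which is the claim.

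The step most prone to slips is pinning down the block structure of $\Lambda$ and checking that the hollow condition is vacuous; past that, the argument is routine Pieri-rule bookkeeping in the spirit of Corollary~\ref{c:onepart}. A minor additional point is the handling of the degenerate cases $a_i=0$ or $b_j=0$ --- where the corresponding tensor factor is trivial and the corresponding row or column of $\Lambda$ is forced to be empty, consistently with a zero margin in the contingency table --- together with the routine identification of a partition of length at most $1$ with the nonnegative integer given by its unique part (or $0$ if it is empty).
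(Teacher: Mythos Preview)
Your proof is correct and follows essentially the same approach as the paper: rewrite the $\hom$-dimension as $\LRC{0}{\bmu}$ for the same $\bmu$, then identify the hollow LR-contingency tables with margins $\bmu$ as block matrices $\left[\begin{smallmatrix}0&0\\C&0\end{smallmatrix}\right]$ where $C$ is a classical $s\times r$ contingency table. The only difference is cosmetic: the paper invokes Corollary~\ref{c:onepart} as a black box, whereas you go directly to Theorem~\ref{thm:bigthm} and redo the Pieri-rule analysis. Your route has the small advantage that the stable-range bound $n\ge r+s$ is visibly the one coming from Theorem~\ref{thm:bigthm} (since each $\ell(\mu_k^+)+\ell(\mu_k^-)\le 1$ here), whereas the literal hypothesis of Corollary~\ref{c:onepart} would read $n\ge 2(r+s)$.
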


\begin{proof}
We have
\begin{align*}
    \dim \hom_G\left(\bigotimes_{i=1}^r \F{(a_i)},\bigotimes_{j=1}^s \F{(b_j)}\right)&=\dim\left[\left(\bigotimes_{i=1}^r \F{(a_i)}\right)^*\otimes \left(\bigotimes_{j=1}^s \F{(b_j)}\right)\right]^G\\
    &=\dim \left[ \left( \bigotimes_{i=1}^r \F[0]{(a_i)} \right) \otimes \left(\bigotimes_{j=1}^s \F{(b_j)}\right)\right]^G\\[2ex]
    &= \LRC{0}{\bmu,}
\end{align*}
where
\begin{equation*}
\bmu^+=\Big(\underbrace{0,\dots,0}_r,(b_1),\dots,(b_s)\Big) \quad \text{and} \quad \bmu^-=\Big((a_1),\dots,(a_r),\underbrace{0,\dots,0}_s\Big).
\end{equation*}
Since all partitions in $\bmu^+$ and $\bmu^-$ have length at most 1, we can apply Corollary \ref{c:onepart}.  Therefore, $\LRC{0}{\bmu}$ is equal to the number of $(r+s) \times (r+s)$ matrices with block form
\begin{equation*}
    \begin{bmatrix}
    0&0\\
    C&0
    \end{bmatrix}_{\textstyle ,}
\end{equation*}
where $C$ is an $s \times r$ contingency table (not necessarily hollow) with margins $(b_1,\dots,b_s)$ and $(a_1,\dots,a_r)$.
\end{proof}

The remaining examples in this section are motivated by Schur-Weyl duality; by counting contingency tables, we prove statements in $\GL_n$ whose analogues in the symmetric group are well-known classical results.

\begin{cor}
\label{c:schur}
Let $G=\GL_n$, with $n \geq 2r$. Then $\dim \normalfont\text{End}_G\Big( \bigotimes^r\C^n\Big)=r!$.
\end{cor}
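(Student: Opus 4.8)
The plan is to unwind the statement using the identification $\C^n\cong\F{(1)}$, the defining representation of $G=\GL_n$ noted in Section~\ref{s:prelim}. Since $\text{End}_G(V)=\hom_G(V,V)$ for any $G$-module $V$, we have
\[
\dim\text{End}_G\left(\bigotimes^r\C^n\right)=\dim\hom_G\left(\bigotimes_{i=1}^r\F{(1)},\:\bigotimes_{j=1}^r\F{(1)}\right),
\]
which is exactly the quantity computed in Corollary~\ref{c:realcontingency}, specialized to $s=r$ and $a_i=b_j=1$ for all $i,j$. The hypothesis $n\geq 2r=r+s$ is precisely what Corollary~\ref{c:realcontingency} requires, so it applies directly and identifies this dimension with the number of $r\times r$ contingency tables all of whose row sums and column sums equal $1$.

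It then remains to count such tables. A matrix of nonnegative integers in which every row and every column sums to $1$ must have exactly one entry equal to $1$ in each row and each column, and $0$'s elsewhere; in other words, it is an $r\times r$ permutation matrix. There are $r!$ of these, one for each element of the symmetric group $S_r$, giving the asserted value.

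I do not anticipate any genuine obstacle: the result is an immediate consequence of Corollary~\ref{c:realcontingency} together with the elementary fact that $0$/$1$-matrices with unit margins are permutation matrices, and there is no need to invoke Theorem~\ref{thm:bigthm} or the Pieri rule afresh. The only point worth emphasizing is the interpretive one flagged just before the statement: this computation recovers, purely by counting contingency tables, the $\GL_n$-side of Schur--Weyl duality in the stable range $n\geq 2r$, namely that the image of $\C[S_r]$ in $\text{End}(\bigotimes^r\C^n)$ has dimension $r!$ (equivalently, that $S_r\to\GL(\bigotimes^r\C^n)$ acts with linearly independent image).
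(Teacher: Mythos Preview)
Your proof is correct and follows essentially the same approach as the paper: rewrite $\text{End}_G(\bigotimes^r\C^n)$ as $\hom_G(\bigotimes^r\F{(1)},\bigotimes^r\F{(1)})$, apply Corollary~\ref{c:realcontingency} with $s=r$ and all $a_i=b_j=1$, and identify the resulting contingency tables with permutation matrices. The only addition you make is the interpretive remark about Schur--Weyl duality, which is accurate and in the spirit of the paper's motivation for this section.
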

\begin{proof}
Recalling that $\F{(1)}\cong \C^n$, we have
\begin{equation*}
    \textstyle\dim \text{End}_G\Big( \bigotimes^r\C^n\Big)=\dim \hom_G\Big( \bigotimes^r\F{(1)},\:\bigotimes^r \F{(1)}\Big).
\end{equation*}
Hence we can apply Corollary \ref{c:realcontingency} where $a_i=b_i=1$ for all $i=1,\ldots,r$.  It follows that the desired dimension equals the number of $r\times r$ contingency tables whose rows and columns all sum to 1.  But these are precisely the $r\times r$ permutation matrices, and so
\begin{equation*}
    \textstyle
    \dim \text{End}_G\Big( \bigotimes^r\C^n\Big) = |S_r|=r!.
\end{equation*}
\end{proof}

\begin{exam}
Let $\la$ be a partition whose Young diagram has $r$ rows and $s$ columns. For this example, we will use the more standard subscript notation $\la_i$ to denote the $i^\text{th}$ part of $\la$, and we will use exponents to designate repeated parts in a partition, so that $(1^k)=(\underbrace{1,\ldots,1}_k)$.   Assuming $n\geq r+|\la|$, we claim
\begin{equation}
\label{symmetrizer}
    \left[ \bigotimes_{j=1}^s\textstyle\bigwedge^{\la'_j}(\C^n) : \displaystyle\bigotimes_{i=1}^r S^{\la_i}(\C^n) \right]=1,
\end{equation}
as representations of $\GL_n$.  (Some readers may recall the corresponding result for the symmetric group from Schur--Weyl duality.) Indeed, rewriting the left-hand side of \eqref{symmetrizer} as representations of $G=\GL_n$, we have
\begin{align*}
    \left[ \bigotimes_{j=1}^s \F{(1^{\la'_j})} : \bigotimes_{i=1}^r \F{(\la_i)} \right]&=\dim \left[ \left(\bigotimes_{j=1}^s \F{(1^{\la'_j})}\right)^*\otimes \left( \bigotimes_{i=1}^r \F{(\la_i)} \right)\right]^G\\
    &=\dim \left[ \left(\bigotimes_{j=1}^s \F{\left(0,\:(1^{\la'_j})\right)}\right)\otimes \left( \bigotimes_{i=1}^r \F{(\la_i)} \right)\right]^G\\[2ex]
    &= \LRC{0}{\bmu,}\\
\end{align*}
where $\bmu^+=(0^s,(\la_1),\ldots,(\la_r))$ and $\bmu^-=((1^{\la'_1}),\ldots, (1^{\la'_s}), 0^r)$.  Any LR-contingency table with margins $\bmu$ is an $(r+s)\times (r+s)$ matrix with block form $\Lambda = \left[\begin{smallmatrix} 0&0\\C&0 \end{smallmatrix}\right]$.  By the Pieri rule, $C$ must be a contingency table whose entries are either $0$ or $1$, with margins $(\la_1,\ldots,\la_r)$ and $(\la'_1,\ldots,\la'_s)$.  But there is exactly one such table $C$, which we construct by filling with 1's in the shape of the Young diagram of $\la$, and 0's elsewhere.  By the Pieri rule, $\| \Lambda \|_{\bmu}=1$, and so by Theorem \ref{thm:bigthm}, we have \eqref{symmetrizer}.
\end{exam}

\begin{exam}  We consider the $r$-fold tensor product of $\F{\la}$ where $\la$ is a partition whose Young diagram is a \textit{hook}, meaning every row except the first has at most one box.  We include this example because hooks are relevant when examining the exterior algebra of defining representations of the symmetric group (see \cite{gw}, Section 9.2.4 exercises). 

Let $r\geq 1$ and $s\geq r^2 + r$, and assume $n >r+s$.  We claim
\begin{equation}
\label{hooks}
    \left[ \F{((r+1)^r,r^s)}:\textstyle \bigotimes^r \F{(r+1,1^s)}\right]=1,
\end{equation}
where the exponents designate repeated parts as in the previous example.  Note that the partition $(r+1,1^s)$ is a hook.  The Young diagram of $((r+1)^r,r^s)$ is an $(r+1)\times(r+s)$ rectangle, with a vertical strip of length $s$ removed from the lower-right.  As a concrete example, take $r=2$ and $s=7$; then letting a Young diagram $\la$ denote the corresponding representation $\F{\la}$, our claim is
\begin{equation*}
\tiny\Yvcentermath1\yng(3,3,2,2,2,2,2,2,2) \hookrightarrow \quad \Yvcentermath1\yng(3,1,1,1,1,1,1,1) \otimes \quad\Yvcentermath1\yng(3,1,1,1,1,1,1,1) \normalsize\text{ with multiplicity 1.}
\end{equation*}

To prove the claim, let $G=\GL_n$, with $n=r+s+1$.  Note that our condition on $s$ implies  $n\geq(r+1)^2$.  By padding out with 0's, the claim \eqref{hooks} can be written $\left[ \F{((r+1)^r,r^s,0)}:\bigotimes^r \F{(r+1,1^s,0^r)}\right]=1$.  Let $V=\F{((r+1)^r,r^s,0)}$ and $W=\F{(r+1,1^s,0^r)}$.  Then 
\begin{align*}
    [V:W^{\otimes r}] &=\dim \left(V^*\otimes W^{\otimes r}\right)^G\\
    &= \dim \left[(V\otimes \det^{-r})^*\otimes (W \otimes \det^{-1})^{\otimes r}\right]^G\\
    &= \dim \left[ \left( \F{(1^r,0^s,-r)} \right)^* \otimes \textstyle \bigotimes^r \F{(r,0^s,-1^r)}\right]^G\\
    &=\dim\left[\textstyle \bigotimes^{r+1} \F{(r,0^s,-1^r)} \right]^G\\
    &= \LRC{0}{\bmu,}
\end{align*}
where $\bmu^+=((r),\ldots,(r))$ and $\bmu^-=((1^r),\ldots,(1^r))$.  Since $n\geq (r+1)^2$, we are in the stable range.  By Theorem \ref{thm:bigthm} and the Pieri rule, $\LRC{0}{\bmu}$ is the number of hollow $(r+1) \times (r+1)$ contingency tables with entries from $\{0,1\}$, where both margins are $(r,\ldots,r)$.  But there is exactly one such contingency table, namely the matrix with 0's on the diagonal and 1's elsewhere.  Since the result holds for $n=r+s+1$, it holds for all $n > r+s$ by stability.
\end{exam}

\section{Result for $\O_n$ and $\Sp_{2n}$}
\label{s:ortho}

Our method above, which began by letting $G_0$ be the real compact group $\U(n)$, can be easily adapted for $G_0=\O(n)$.  In this case, we compute tensor product multiplicities for the complex orthogonal group $G=\O_n:=\O(n,\C)$ using \textit{symmetric} hollow LR-contingency tables.  Further, since the tensor product multiplicities for the symplectic group coincide with those of the orthogonal group within the stable range, our result holds for $\Sp_{2n}:=\Sp(2n,\C)$. Thus, taking Theorem \ref{thm:bigthm} together with Theorem \ref{bigthmortho} below, we can compute tensor product multiplicities for any complex classical group using LR-contingency tables. 

For $G=\O_n$, the method and proof follow the $\GL_n$ case above \emph{mutatis mutandis}, and so we simply list the changes in the table below.  The details behind each entry can be either found in or inferred from the proofs in \cite{htw}, whose notation we follow here; note that $SM_p$ denotes the space of symmetric $p \times p$ complex matrices.
\begin{center}
{\renewcommand{\arraystretch}{1.5}
\begin{longtable}{|c||c|c|}\hline
    &$G=\GL_n$ & $G=\O_n$\\ \hline \hline
    Compact pair $G_0$---$\mathfrak g'_0$ &  $\U(n)$---$\mathfrak{u}(P,Q)$ & $\O(n,\mathbb R)$---$\sp(2P,\mathbb R)$\\ \hline
    $G$---$\mathfrak g'$ & $\GL_n$---$\gl_{P+Q}$ & $\O_n$---$\sp_{2P}$\\ \hline
    Rational $G$-irreps & $\F{\mu_i}$ & $E^{\mu_i}_n$\\ \hline
Conditions on $\mu_i$ & weakly decreasing $n$-tuple & \makecell{(nonnegative) partition, with\\$\mu_i'(1)+\mu_i'(2)\leq n$}\\ \hline
    $K'$ & $\GL_P \times \GL_Q$ & $\GL_P$\\ \hline
    $K$ & $\GL_{p_1} \times \GL_{q_1} \times \cdots \times \GL_{p_r}\times \GL_{q_r}$ & $\GL_{p_1} \times \cdots \times \GL_{p_r}$\\ \hline
    Seesaw pair & $\begin{array}{ccc}
    (\GL_n)^r & \leftrightarrow & \bigoplus_i \gl_{p_i+q_i}\\
    \uparrow & & \downarrow \\
    \Delta(\GL_n) & \leftrightarrow & \gl_{P+Q}
\end{array}$ & 
$\begin{array}{ccc}
    (\O_n)^r & \leftrightarrow & \bigoplus_i \sp_{2p_i}\\
    \uparrow & & \downarrow \\
    \Delta(\O_n) & \leftrightarrow & \sp_{2P}
\end{array}$ \\\hline
Choice of $p_i$, $q_i$ & $p_i = \ell(\mu^+_i)$, \quad $q_i=\ell(\mu^-_i)$ & $p_i = \ell(\mu_i)$\\ \hline
Stable range & $n\geq P+Q$ & $n \geq 2P$\\ \hline
$V$ & $M_{n,P} \oplus M_{n,Q}$ & $M_{n,P}$\\ \hline
Inf.-dim.\ $\mathfrak g'$-modules & $\tF{\mu_i}{p_i,q_i}$ & $\tE{\mu_i}{2p_i}$ \\ \hline
$K'$-decomposition & $\cong \C[M_{p_i,q_i}]\otimes \gF{\mu^+_i}{p_i}\otimes \gF{\mu^-_i}{q_i}$& $\cong \C[SM_{p_i}]\otimes \gF{\mu_i}{p_i}$ \\ \hline
$m_{\bmu}$ &$\left[\bigotimes_i \tF{\mu_i}{p_i,q_i}:\tF{0}{P,Q}\right]$ & $\left[\bigotimes_i \tE{\mu_i}{2p_i}:\tE{0}{2P}\right]$\\ \hline
$\C[V]^G$ & $\tF{0}{P,Q}\cong \C[M_{P,Q}]$& $\tE{0}{2P}\cong \C[SM_P]$\\ \hline
$K$-decomp.\ of $\C[V]^G$ & $\displaystyle \bigotimes_{i,j} \C[ M_{p_i,q_j}]$&$ \displaystyle\bigotimes_i \C[SM_{p_i}] \otimes \bigotimes_{i<j} \C[M_{p_i,p_j}]$\\ \hline
\end{longtable}}
\end{center}
Using the last line of the table, we begin with the analogue of \eqref{eq:bigdecomp}, and working with formal $T$-characters as in the proof of Theorem \ref{thm:bigthm}, we obtain the analogue of \eqref{eq:secondbigiso}:
\begin{align*}
    \bigotimes_i \C[SM_{p_i}] \otimes \bigotimes_{i<j} \C[M_{p_i,p_j}] &\cong \bigoplus_{\bmu}m_{\bmu} \bigotimes_k \C[SM_{p_k}]\otimes \gF{\mu_k}{p_k} \\
    \prod_i \ch \left(\C[SM_{p_i}]\right) \prod_{i<j}\ch \left(\C[M_{p_i,p_j}]\right) &= \sum_{\bmu} m_{\bmu} \prod_k \ch \left(\C[SM_{p_k}]\right)\cdot \ch \left(\gF{\mu_k}{p_k}\right) \\
    \prod_{i<j}\ch \left(\C[M_{p_i,p_j}]\right) &= \sum_{\bmu} m_{\bmu} \prod_k \ch\left(\gF{\mu_k}{p_k}\right) \\
    \bigotimes_{i<j}\C[M_{p_i,p_j}] &\cong \bigoplus_{\bmu} m_{\bmu} \bigotimes_k \gF{\mu_k}{p_{k.}}
\end{align*}
Applying \eqref{howepq} to each tensor factor in the left-hand side, we have
\begin{equation}
\label{eq:orthiso}
    \bigotimes_{i<j}\bigoplus_{\la_{ij}} \gF{\la_{ij}}{p_i}\otimes \gF{\la_{ij}}{p_j} \cong \bigoplus_{\bmu} m_{\bmu} \bigotimes_k \gF{\mu_k}{p_k}
\end{equation}
for all partitions $\la_{ij}$ such that $\ell(\la_{ij})\leq \min\{p_i,p_j\}$. The condition $i<j$ in \eqref{eq:orthiso} requires us to define a different kind of contingency table than before, in which we take the generalized LRC's across rows only (or equivalently, columns only).

\begin{dfn}
Let $\Lambda=(\la_{ij})$ be a symmetric $r \times r$ matrix of partitions.  Then
\begin{equation*}
    \llbracket \Lambda \rrbracket_{\bmu} := \prod_{i=1}^r \LRC{\mu_i}{\blarow{i}}
\end{equation*}
\end{dfn}

\begin{dfn}
Let $\bmu=(\mu_1,\ldots,\mu_r)$ be a vector of partitions with $\ell(\mu_i)\leq n$.  Then an \emph{$LR_{Sym}$-contingency table with margins }$\bmu$ is a symmetric $r \times r$ matrix of partitions $\Lambda = (\la_{ij})$ such that $\llbracket \Lambda \rrbracket_{\bmu}\neq 0$.
\end{dfn}

Now interchanging the tensor product and direct sum, we can rewrite the left-hand side of \eqref{eq:orthiso} as 
\begin{equation}
    \label{hugetensorortho}
    \bigoplus_\Lambda \Big( \gF{\la_{1,2}}{p_1} \otimes \cdots \otimes \gF{\la_{1,r}}{p_1} \otimes \cdots \otimes \gF{\la_{1,r}}{p_r} \otimes \cdots \otimes \gF{\la_{r-1,r}}{p_r}\Big),
\end{equation}
where the direct sum is over all hollow \textit{symmetric} matrices of partitions $\Lambda$ with $\ell(\la_{ij})\leq \min\{p_i,p_j\}$.  Since any $K$-representation $\bigotimes_{i=1}^r \gF{\mu_i}{p_i}$ occurring in a fixed summand of \eqref{hugetensorortho} has multiplicity $\llbracket \Lambda \rrbracket_{\bmu}$, we obtain the following result for the orthogonal and symplectic groups.

\begin{thm}
\label{bigthmortho}
Let $\bmu =(\mu_1,\dots,\mu_r)$ be a
vector of partitions. Let $E^{\mu_i}_n$ denote the irreducible representation of $\O_n$ labeled by the partition $\mu_i$, and let $V^{\mu_i}_{2n}$ denote the irreducible reprentation of $\Sp_{2n}$ with highest weight $\mu_i$.  If $n\geq 2 \sum_{i=1}^r \ell(\mu_i)$, then
\begin{equation*}   
\dim \left( E^{\mu_1}_n \otimes \cdots \otimes E^{\mu_r}_n \right)^{\O_n} =
\dim\left( V^{\mu_1}_{2n} \otimes \cdots \otimes V^{\mu_r}_{2n}\right)^{\Sp_{2n}} =
\sum_\Lambda \llbracket \Lambda \rrbracket_{\bmu,}
\end{equation*}
where $\Lambda$ ranges over all hollow $ LR_ {Sym}$-contingency tables with margins $\bmu$.
\end{thm}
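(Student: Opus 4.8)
The plan is to re-run the proof of Theorem~\ref{thm:bigthm} line by line using the right-hand column of the table above; everything transfers \emph{mutatis mutandis}, so the only genuinely new ingredients are the $K$-decomposition of $\C[SM_P]$ and the passage from $\O_n$ to $\Sp_{2n}$. First I would set $p_i=\ell(\mu_i)$ and $P=\sum_{i=1}^r p_i$, observe that the hypothesis $n\geq 2\sum_i\ell(\mu_i)=2P$ places us in the stable range for the pair $\O_n$---$\sp_{2P}$, and apply seesaw reciprocity to the displayed seesaw pair in order to identify $\dim\bigl(E^{\mu_1}_n\otimes\cdots\otimes E^{\mu_r}_n\bigr)^{\O_n}$ with the multiplicity $m_{\bmu}=\bigl[\bigotimes_i\tE{\mu_i}{2p_i}:\tE{0}{2P}\bigr]$. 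This is the orthogonal analogue of \eqref{mmu}, and it is the only point at which seesaw reciprocity is invoked.

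Next I would restrict $\tE{0}{2P}$ to the action of $K'=\GL_P$, which by the table is $\tE{0}{2P}\cong\C[SM_P]$ under $(g\cdot f)(X)=f(g^tXg)$, and then restrict further to the block-diagonal subgroup $K=\GL_{p_1}\times\cdots\times\GL_{p_r}$. The key computation here --- the analogue of \eqref{eq:modulebreakdown} --- is that for symmetric $X$ and block-diagonal $g$ with blocks $g_1,\dots,g_r$ the $(i,j)$ block of $g^tXg$ equals $g_i^tX_{ij}g_j$, the diagonal blocks $X_{ii}$ are symmetric, and the block $X_{ji}=X_{ij}^t$ carries no new coordinate functions; hence $\C[SM_P]\cong\bigotimes_i\C[SM_{p_i}]\otimes\bigotimes_{i<j}\C[M_{p_i,p_j}]$ as a $K$-representation, while $\bigotimes_i\tE{\mu_i}{2p_i}$ decomposes over $K$ as $\bigoplus_{\bmu}m_{\bmu}\bigotimes_k\bigl(\C[SM_{p_k}]\otimes\gF{\mu_k}{p_k}\bigr)$. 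Passing to formal $T$-characters, canceling the common factors $\ch(\C[SM_{p_i}])$, and using local finiteness of the $K$-action (each $\GL_{p_i}$ acts by degree-preserving operators, so both sides are completely reducible with finite multiplicities and are pinned down by their characters) yields $\bigotimes_{i<j}\C[M_{p_i,p_j}]\cong\bigoplus_{\bmu}m_{\bmu}\bigotimes_k\gF{\mu_k}{p_k}$, which is the isomorphism displayed just above \eqref{eq:orthiso}.

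From there I would apply the Howe duality \eqref{howepq} to each $\C[M_{p_i,p_j}]$ with $i<j$, interchange the tensor product with the direct sum, and --- after setting $\la_{ji}:=\la_{ij}$ and $\la_{ii}:=0$, so that the resulting index $\Lambda$ ranges over hollow symmetric $r\times r$ matrices of partitions with $\ell(\la_{ij})\leq\min\{p_i,p_j\}$ --- read off that the $\GL_{p_k}$-factor of the summand indexed by $\Lambda$ is exactly the $k$-th row $\bigotimes_j\gF{\la_{kj}}{p_k}$ of $\Lambda$. Consequently the multiplicity of the irreducible $K$-module $\bigotimes_k\gF{\mu_k}{p_k}$ in that summand equals $\prod_k\LRC{\mu_k}{\blarow{k}}=\llbracket\Lambda\rrbracket_{\bmu}$, which is nonzero precisely when $\Lambda$ is a hollow $LR_{Sym}$-contingency table with margins $\bmu$; summing over $\Lambda$ gives $m_{\bmu}=\sum_\Lambda\llbracket\Lambda\rrbracket_{\bmu}$, and together with the first paragraph this proves the $\O_n$ identity. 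For the $\Sp_{2n}$ identity I would simply invoke the classical fact that within the stable range the invariant $\dim\bigl(V^{\mu_1}_{2n}\otimes\cdots\otimes V^{\mu_r}_{2n}\bigr)^{\Sp_{2n}}$ coincides with the corresponding orthogonal invariant (see \cite{htw} and the references therein), so no separate combinatorics is needed. I expect the $\C[SM_P]$ decomposition to be the only real subtlety: the essential point is that symmetry of $X$ lets only the $i<j$ off-diagonal blocks contribute independent coordinates, and it is precisely this that collapses the full $r\times r$ array of row- and column-LRC's of Theorem~\ref{thm:bigthm} down to LRC's taken over rows alone and forces the contingency tables to be symmetric; everything else is a faithful transcription of that proof.
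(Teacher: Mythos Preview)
Your proposal is correct and follows essentially the same route as the paper: seesaw reciprocity for the $\O_n$---$\sp_{2P}$ pair, the $K$-decomposition of $\C[SM_P]$ into diagonal symmetric blocks and strictly upper-triangular rectangular blocks, cancellation of the $\ch(\C[SM_{p_i}])$ factors at the level of characters, and then \eqref{howepq} applied to each $\C[M_{p_i,p_j}]$ with $i<j$ to read off $m_{\bmu}=\sum_\Lambda\llbracket\Lambda\rrbracket_{\bmu}$, with the $\Sp_{2n}$ case handled by the stable-range coincidence cited from \cite{htw}. One small wording slip: where you write ``$\bigotimes_i\tE{\mu_i}{2p_i}$ decomposes over $K$ as $\bigoplus_{\bmu}m_{\bmu}\bigotimes_k(\C[SM_{p_k}]\otimes\gF{\mu_k}{p_k})$'' you mean that $\tE{0}{2P}$ so decomposes (via its $\bigoplus_i\sp_{2p_i}$-decomposition together with the $K'$-structure of each $\tE{\mu_i}{2p_i}$), not the single tensor product on the left.
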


\begin{exam}
If $n\geq 2r$, then we claim
\begin{equation*}
    \dim\left( \textstyle \bigotimes^r V^{(1)}_{2n} \right)^{\Sp_{2n}} =
    \dim \left( \textstyle \bigotimes^r E^{(1)}_n \right)^{\O_n}=
    \begin{cases}
    (r-1)!!& \text{if } r \text{ is even},\\
    0 & \text{if }r\text{ is odd}.
    \end{cases}
\end{equation*}
Indeed, setting $\bmu=((1),\ldots,(1))$, it is clear by the Pieri rule that any ${\rm LR}_{\rm Sym}$-contingency table $\Lambda$ with margins $\bmu$ has exactly one $(1)$ in each row and 0's elsewhere, which implies $\llbracket \Lambda \rrbracket_{\bmu}=1$.
Each such $\Lambda$ is a hollow symmetric $r\times r$ permutation matrix, and the number of these matrices is precisely the number of fixed-point-free involutions on $r$ letters.  No such involution exists when $r$ is odd; if $r$ is even, then the number of such involutions is known to be $(r-1)!!=(r-1)(r-3)(r-5)\cdots 5 \cdot 3 \cdot 1$.
\end{exam}

\begin{exam}
Recall that in the standard coordinates, $2\epsilon_1$ is the highest root for the Lie algebra $\mathfrak{sp}_{2n}$.  Hence the highest weight of the adjoint representation of $\Sp_{2n}$ is labeled by the one-part partition $(2)$.  Let $V:=V^{(2)}_{2n}$ denote this adjoint representation. We now describe the dimension of the space of invariants of $V^{\otimes r}$, assuming $n \geq 2r$.

By Theorem \ref{bigthmortho} we have $\dim (V^{\otimes r})^{\Sp_{2n}}=\sum_\Lambda \llbracket \Lambda \rrbracket_{\bmu}$, where $\Lambda$ is a hollow ${\rm LR}_{\rm Sym}$-contingency table with margins $\bmu=((2)^r)$.  By the Pieri Rule, any such $\Lambda$ is a hollow contingency table (in the standard sense) whose rows all sum to 2; the symmetric condition then forces the column sums to be 2 as well.  For each such $\Lambda$, the Pieri rule gives $\llbracket \Lambda \rrbracket_{\bmu}=1$.  Therefore, $\dim (V^{\otimes r})^{\Sp_{2n}}$ coincides with the number of hollow symmetric $r\times r$ contingency tables with both margins $(2,\ldots,2)$.  (These are two of the many characterizations given for the integer sequence A002137 in the OEIS.)

This same result holds if we set $V=E^{(1,1)}_n$, the irreducible representation of $\O_n$ which restricts to the adjoint representation of ${\rm SO}_n$.
\end{exam}

\bibliographystyle{amsplain}
\bibliography{references}

\end{document}